\makeatletter\@addtoreset {equation}{section}\makeatother
\newtheorem{theorem}{Theorem}
\newtheorem{remark}{Remark}
\newtheorem{corollary}{Corollary}
\newenvironment{proof}{
    \noindent {\it Proof.}}{\hfill$\Box$
}
\begin{document}

\title{\bf Factorization of the Indefinite Convection-Diffusion Operator.}

\author{Marina Chugunova \\ {\small Department of Mathematics, University of
Toronto, Canada} \\ Vladimir Strauss \\ {\small Department of Pure
\& Applied Mathematics, Simon Bolivar University, Venezuela} }

\date{\today}
\maketitle

\begin{abstract}
We prove that some non-self-adjoint differential operator admits
factorization and apply this new representation of the operator to
construct explicitly its domain. We also show that this operator is
J-self-adjoint in some Krein space.

MSC classes: 34Lxx; 76Rxx; 34B24

Keywords: factorization, Krein space, J-self-adjoint, fluid
mechanics, backward-forward heat equation
\end{abstract}

\section{Introduction}

The time-evolution of a thin film of viscous fluid on the inner
surface of a rotating cylinder can be approximated by
 the Cauchy problem for the periodic backward-forward heat equation
\begin{equation}
\label{heat PDE}
 y_t + l[y] = 0, \quad y(0,x) = y_0, \quad y(x,t) = y(x + 2 \pi,t), \quad x\in (-\pi,\pi), \quad t > 0
\end{equation}
where
\begin{equation}
\label{heat operator}
 l[y] = \epsilon (\sin x y_x)_x + y_x, \quad \epsilon > 0.
\end{equation}

This model was derived by Benilov, O'Brien and Sazonov
\cite{Benilov1, Benilov2} under assumption that the parameter
$\epsilon$, related to the thickness of the film, is sufficiently
small, i.e $\epsilon << 1$.

It was first shown numerically by Benilov, O'Brien and Sazonov
\cite{Benilov1} that the spectrum of the operator $L$ defined by the
operation $l[.]$ and periodic boundary conditions $y(-\pi) = y(\pi)$
consists of pure imaginary eigenvalues only. This result was quite a
surprising because it is well known that the Cauchy problem
\ref{heat PDE} is ill-posed at least for the class of finitely
smooth functions.

The spectral properties of the operator $L$ were studied rigorously
by Davies \cite{Davies} and by Chugunova, Pelinovsky \cite{ChugPel}.
Approaching the problem in two different ways they proved
analytically that if the parameter $\epsilon$ is within the interval
$|\epsilon| < 2$ then the operator $L$ admits closure in the Hilbert
space $L^2[-\pi, \pi]$ and being restricted to the orthogonal to a
constant subspace has compact inverse of Hilbert-Schmidt type and as
a consequence its spectrum is discrete with the only accumulation
point at infinity.

The numerical conjecture that all eigenvalues are pure imaginary was
recently proved by Weir \cite{Weir}. The elegant proof based on the
continuation of the eigenfunctions into Hardy space on the unit disk
and explicit construction of the symmetric operator.

Our goal in this paper is to find the factorization of the operator
$L$ and to construct its domain explicitly. As a consequence we
prove that the operator $L$ acting in physical space is unitary
equivalent to the operator $A$ introduced by Davies \cite{Davies}
acting in the Fourier space. We also prove that the non-self-adjoint
differential operator $L$ belongs to the class of $J$-self-adjoint
operators in some Krein space.  For basic facts related to Krein
spaces see \cite{AI}.

\section{Factorization of the non-self-adjoint operator $L$.}

We denote by $\mathfrak{D}(T)$ and $\mathfrak{R}(T)$ the domain and
the range of linear operator $T$ respectively. The notation
$\mathcal{L}^2$ is used for the standard Lebesgue space of scalar
functions defined on the interval $(-\pi , \pi)$. From here on $L$
is the indefinite convection-diffusion operator $L:$
$$(Ly)(x)=\epsilon\cdot (\sin(x)y^{\prime}(x))^{\prime}+ y^{\prime}(x), \quad \mathcal{L}^2\mapsto \mathcal{L}^2$$
with the domain of all absolutely continuous $2\pi$-periodic
functions $y(x)$ such that $(Ly)(x)\in \mathcal{L}^2$.

In addition, we define the operator $S:$
$$\mathcal{L}^2\mapsto \mathcal{L}^2, \quad (Sy)(x)=y^{\prime}(x),$$
where $y^{\prime}(x)\in \mathcal{L}^2$, $y(-\pi )=y(\pi )$, and the
operator $M:$
$$\mathcal{L}^2\mapsto \mathcal{L}^2, \quad (My)(x)=\epsilon\cdot(\sin(x)y(x))^{\prime}+ y(x)$$
with the domain of all absolutely continuous  functions $y(x)$ such
that $(My)(x)\in \mathcal{L}^2$.

\begin{theorem}\label{main}If the parameter $\epsilon\in (0,\, 2)$, then $L=MS$.\end{theorem}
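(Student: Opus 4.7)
The statement concerns equality of two unbounded operators. For any $y$ smooth enough for both sides to be defined one has $(MSy)(x) = M(y')(x) = \epsilon(\sin(x) y'(x))' + y'(x) = (Ly)(x)$, so the actions of $L$ and $MS$ automatically agree and the content of the theorem is that the domains coincide: $\mathfrak{D}(L)=\mathfrak{D}(MS)$. The plan is to prove the two inclusions separately.

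The inclusion $\mathfrak{D}(MS)\subseteq\mathfrak{D}(L)$ is essentially a direct unpacking of definitions. If $y\in\mathfrak{D}(S)$ and $Sy=y'\in\mathfrak{D}(M)$, then $y$ is $2\pi$-periodic, $y$ is absolutely continuous (since $y'\in\mathcal{L}^2\subset\mathcal{L}^1$ and $y(x)=y(-\pi)+\int_{-\pi}^x y'(t)\,dt$), and $Ly=M(Sy)\in\mathcal{L}^2$; these are exactly the defining requirements for $\mathfrak{D}(L)$.

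The reverse inclusion $\mathfrak{D}(L)\subseteq\mathfrak{D}(MS)$ is the substantive direction. For $y\in\mathfrak{D}(L)$, set $f:=Ly\in\mathcal{L}^2$ and $u:=y'$, which a priori lies only in $\mathcal{L}^1$. The main task is to show $u\in\mathcal{L}^2$, since this implies $y\in\mathfrak{D}(S)$ and, together with the identity $(\sin(x)u(x))'=(f-u)/\epsilon\in\mathcal{L}^2$, also gives $u\in\mathfrak{D}(M)$. Setting $w(x):=\sin(x)u(x)$, the relation $\epsilon w'+u=f$, equivalently the singular first-order linear ODE $\epsilon w'+w/\sin x=f$, shows that $w$ is absolutely continuous with $w(0)=w(\pm\pi)=0$. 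Solving this ODE on $(0,\pi)$ and $(-\pi,0)$ with integrating factor $|\tan(x/2)|^{1/\epsilon}$ yields on each subinterval an explicit formula
$$
u(x)=\frac{1}{\epsilon\sin x}\int_c^x\Bigl(\frac{\tan(t/2)}{\tan(x/2)}\Bigr)^{1/\epsilon}f(t)\,dt,
$$
with the appropriate endpoint $c\in\{0,\pm\pi\}$, and an $\mathcal{L}^2$ bound on $u$ in terms of $\|f\|_{\mathcal{L}^2}$ then reduces to a weighted Hardy-type estimate at each singular point of $\sin x$.

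The main obstacle is precisely this weighted Hardy estimate, together with the matching of the two solution branches across $x=0$ and the periodic identification at $x=\pm\pi$ so that $u$ has a single representative in $\mathcal{L}^2(-\pi,\pi)$. It is here that the hypothesis $\epsilon\in(0,2)$, equivalently $1/\epsilon>1/2$, is genuinely used: this exponent on the weight $|\tan(t/2)|^{1/\epsilon}$ is what makes the Hardy bound close near $t=0$. Once $u\in\mathcal{L}^2$ is in hand, the remaining verifications that $y\in\mathfrak{D}(S)$ and $u\in\mathfrak{D}(M)$ are immediate, giving $y\in\mathfrak{D}(MS)$.
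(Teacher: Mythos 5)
Your reduction of the theorem to the inclusion $\mathfrak{D}(L)\subseteq\mathfrak{D}(MS)$, and of that inclusion to proving $y'\in\mathcal{L}^2$ from the explicit integrating-factor formula, is a legitimate route, and it is genuinely different from the paper's: the paper never establishes an $L^2$ bound on the solution formula directly. Instead it writes $M=\epsilon A+I=i\epsilon C+D$ with $C$ self-adjoint and $D=(\frac{\epsilon}{2}\cos x+1)$ bounded below by $1-\frac{\epsilon}{2}>0$, deduces bounded invertibility of $M$ by an abstract argument, identifies $\mathfrak{R}(MS)=\{\mathrm{const}\}^{\perp}$, and then finishes with a soft dichotomy: if $Ly\notin\{\mathrm{const}\}^{\perp}$ one may take $Ly\equiv 1$, and the representation of $y'$ (your formula with $f\equiv 1$) gives $y'>0$ everywhere, contradicting periodicity; if $Ly\in\{\mathrm{const}\}^{\perp}$ one subtracts the solution produced by $MS$ and uses uniqueness of that representation to conclude $y$ differs from it by a constant. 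So in the paper the quantitative work is hidden in the invertibility of $M$, not in a Hardy inequality.

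The problem with your write-up is that its entire analytic content --- the ``weighted Hardy-type estimate'' --- is announced as the main obstacle and then not carried out; as it stands the proposal is a plan rather than a proof. Moreover, you locate the use of $\epsilon\in(0,2)$ at the wrong singularity. Near $x=0$ the operator $f\mapsto x^{-1-1/\epsilon}\int_0^x t^{1/\epsilon}f(t)\,dt$ is bounded on $L^2$ for every $\epsilon>0$ (Schur test for a kernel homogeneous of degree $-1$; one only needs $1/\epsilon>-1/2$), so nothing closes or fails there on account of $\epsilon<2$. The hypothesis is genuinely used elsewhere: (i) to discard the homogeneous solution $c\,(\tan(|x|/2))^{-1/\epsilon}/\sin x$ at $x=0$, whose antiderivative grows like $|x|^{-1/\epsilon}$ and fails to lie in $\mathcal{L}^2$ exactly when $\epsilon\leq 2$ (this is how the paper forces $c=0$); and (ii) at the endpoints $x=\pm\pi$, where in the variable $s=\pi-|x|$ your formula becomes, up to bounded factors, $s^{1/\epsilon-1}\bigl(C_0+\int_s^{\delta}\tau^{-1/\epsilon}\tilde f(\tau)\,d\tau\bigr)$: the term $C_0\,s^{1/\epsilon-1}$ is square-integrable iff $\epsilon<2$, and the dual Hardy operator $\tilde f\mapsto s^{1/\epsilon-1}\int_s^{\delta}\tau^{-1/\epsilon}\tilde f\,d\tau$ is $L^2$-bounded iff $1/\epsilon>1/2$. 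These endpoint estimates, together with the branch matching you mention only in passing, are precisely what must be written out for your argument to be complete.
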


\begin{proof}
Let us consider the operator $A:$ $$\mathcal{L}^2\mapsto
\mathcal{L}^2, \quad (Ay)(x)=(\sin(x)y(x))^{\prime}$$ with
$\mathfrak{D}(A)=\{y(x)\ |\ y(x), (Ay)(x)\in \mathcal{L}^2\}$. Then
a function $y(x)$ can be written as
\begin{equation}\label{1} y(x)=\frac{1}{\sin(x)}\cdot \big(
c+\int_0^x \theta(t)dt\big) ,\quad \theta(t)\in
\mathcal{L}^2.\end{equation} If $x>0$ then
$$ |\int_0^x
\theta(t)dt|\leq \frac{1}{\sin(x)}\cdot \alpha(x)\cdot x^{1/2},$$
where $\alpha(x)=\big(\int_0^x |\theta(x)|^2dx\big)^{1/2}$. Since
the two summands in (\ref{1}) have different orders of growth as
$x\to 0$ this implies that if $y(x)\in \mathcal{L}^2$ then $c=0$ and
\begin{equation}\label{2} y(x)=\frac{1}{\sin(x)}\cdot
\int_0^x \theta(t)dt .\end{equation} Moreover,
\begin{equation}\label{3} |y(x)|\leq\frac{x^{1/2}}{\sin(x)}\cdot
\alpha(x) .\end{equation} A small modification of the same reasoning
leads to the following estimation for every $x\in (-\pi ,\pi)$
\begin{equation}\label{4} |y(x)|\leq\frac{|x|^{1/2}}{|\sin(x)|}\cdot
\alpha(x) .\end{equation} with $\alpha(x)=\big|\int_0^x
|\theta(x)|^2dx\big|^{1/2}$.

Alternatively the same function $y(x)$ can be written as
\begin{equation}\label{5} y(x)=\frac{1}{\sin(x)}\cdot \big(
\tilde{c}-\int_x^{\pi} \theta(t)dt\big) ,\quad \theta(t)\in
\mathcal{L}^2.\end{equation} with the same  $\theta(x)$ as in
(\ref{1}). Representation (\ref{5}) yields the following relations
\begin{equation}\label{6} y(x)=\frac{-1}{\sin(x)}\cdot
\int_x^{\pi} \theta(t)dt \end{equation} and
\begin{equation}\label{7} |y(x)|\leq\frac{(\pi -x)^{1/2}}{\sin(x)}\cdot
\beta(x) \end{equation} with $\beta(x)=\big|\int_x^{\pi}
|\theta(x)|^2dx\big|^{1/2}$.

It follows from (\ref{2}) and (\ref{6}) that
\begin{equation}\label{8} \int_{0}^{\pi}\theta(t)dt=0.\end{equation}

Starting from the point $-\pi$ one can also obtain that
\begin{equation}\label{9} y(x)=\frac{1}{\sin(x)}\cdot \int_{-\pi}^x
\theta(t)dt,\end{equation}
\begin{equation}\label{10} |y(x)|\leq\frac{(-\pi +x)^{1/2}}{|\sin(x)|}\cdot
\gamma(x) \end{equation} with $\gamma(x)=\big(\int_{-\pi}^x
|\theta(x)|^2dx\big|^{1/2}$ and
\begin{equation}\label{11} \int_{-\pi}^{0}\theta(t)dt=0.\end{equation}
Obviously, the natural domain of the operator $B:$
$$\mathcal{L}^2\mapsto \mathcal{L}^2, \quad (By)(x)=\sin(x)\cdot
y(x)^{\prime},$$ where $y(x), (By)(x)\in \mathcal{L}^2$ coincides
with the domain of the operator $A$. And as a consequence we obtain
that $A^{*}=-B$.

Now let us define the operator $C:$ $\mathcal{L}^2\mapsto
\mathcal{L}^2$, $$(Cy)(x)=-i\cdot\big(\sin(x)\cdot
y(x)^{\prime}-\frac{1}{2}\cos(x)y(x)\big)=-i\cdot\big((By)(x)+\frac{1}{2}\cos(x)y(x)\big),$$
where $y(x), (Cy)(x)\in \mathcal{L}^2$ and $D:$
$$(Dy)(x)=(\frac{\epsilon}{2}\cdot\cos(x)+1)\cdot y(x), \quad \mathcal{L}^2\mapsto \mathcal{L}^2. $$
It follows directly from the relation between the operators $A$ and
$B$ and inequalities (\ref{4}), (\ref{7}, (\ref{10}) that $C$ is a
self-adjoint operator.

We now restrict the parameter $\epsilon$ to the interval $0<\epsilon
<2$. This implies that $D$ has the bounded inverse, and hence that
$D^{-1/2}\cdot C\cdot D^{-1/2}$ is a self-adjoint operator.
Therefore the operator defined as
$$\big( i\cdot\epsilon\cdot D^{-1/2}\cdot C\cdot D^{-1/2}+I\big)$$
also has the bounded inverse, and the same is true for the operator
$$M=\big( i\cdot\epsilon\cdot C + D\big)=\epsilon\cdot A+I.$$
Finally, let us show that the subspace $\{const \}^{\perp}\cap
\mathcal{L}^2$ is invariant under $M$ and $M^{-1}$. Indeed, if
$y(x)$ belongs to the domain of $M$, then $y(x)$ has representation
(\ref{2}), and hence
$$(My)(x) =y(x)+\epsilon\cdot \theta(x).$$ It follows by (\ref{8})
and (\ref{11}) that
$$\int_{-\pi}^{\pi}(My)(x)dx=\int_{-\pi}^{\pi}y(x)dx,$$ so
$(My)(x)\in\{const \}^{\perp}\cap \mathcal{L}^2$ if and only if
$y(x)\in\{const \}^{\perp}\cap \mathcal{L}^2$.
\par

Our main goal is to show that $L=MS$. It is easy to check that a
function $y(t)$ belongs to the domain (within $\mathcal{L}^2$) of
$L$ only if
\begin{equation}\label{12}y^{\prime}(x)=(\tan(|x|/2))^{-1/\epsilon}\sin^{-1}(x)\cdot
\big(c+\int_0^x (\tan(|t|/2))^{1/\epsilon}\phi(t)
dt\big),\end{equation}
where $\phi(t)\in \mathcal{L}^2$. Let $x>0$.
Then
 $$|\int_0^x (\tan(t/2))^{1/\epsilon}\phi(t) dt|\leq
\big(\int_0^x t^{2/\epsilon}dt\big)^{(1/2)}\cdot  \big(\int_0^x
|\phi(t)(\frac{\tan(t/2)}{t})^{1/\epsilon}|^2dt\big)^{(1/2)}.$$ The
latter estimation yields $$|\int_0^x (\tan(t/2))^{1/\epsilon}\phi(t)
dt|\leq x^{1/2+1/\epsilon}\cdot \alpha(x),$$ where $\alpha(x)\to 0$
if $x\to 0$. Then
$$|(\tan(x/2))^{-1/\epsilon}\sin^{-1}(x)\cdot
\big(\int_0^x (\tan(t/2))^{1/\epsilon}\phi(t) dt\big)|\leq
x^{-1/2}\cdot \beta(x),$$ where $\beta(x)\to 0$ if $x\to 0$. If
$c=0$, we obtain that
$$|y(x)-y(0)| = |\int_0^x(\tan(\tau/2))^{-1/\epsilon}\sin^{-1}(\tau)\cdot
\big(\int_0^{\tau} (\tan(t/2))^{1/\epsilon}\phi(t) dt\big)d\tau|\leq
\gamma (x)x^{1/2},$$ where $\gamma(x)\to 0$ if $x\to 0$. Thus, if
$c=0$, $y(x)$ is continuous at zero. At the same time if $c\not= 0$,
$y(x)$ contains an additional summand of the order $x^{-1/\epsilon}$
that is out of $\mathcal{L}^2$ for $\epsilon\leq 2$. Thus, $c=0$ in
(\ref{12}) and so

\begin{equation}\label{13}y^{\prime}(x)=(\tan(|x|/2))^{-1/\epsilon}\sin^{-1}(x)\cdot
\int_0^x (\tan(|t|/2))^{1/\epsilon}\phi(t) dt\, .\end{equation} This
representation was derived under the hypothesis that $x>0$, but it
is clear that it is valid for every $x\in (-\pi ,\pi )$.
\par
Now let us assume that there is $y(x)$ such that
\begin{equation}y(x)\in \mathfrak{D}(L)\ \mbox{ but }\ y(x)\not\in
\mathfrak{D}(MS).\label{14}\end{equation} Then there are two
options.
\begin{itemize}
\item $(Ly)(x)\not\in \mathfrak{R}(MS)=\{const \}^{\perp}\cap \mathcal{L}^2$;
\item $(Ly)(x)\in \mathfrak{R}(MS)$.
\end{itemize}
The first option means that $\mathfrak{R}(L)=\mathcal{L}^2$ and
without loss of generality one can assume that $(Ly)(x)\equiv 1$.
Then by (\ref{13})
$$y^{\prime}(x)=(\tan(|x|/2))^{-1/\epsilon}\sin^{-1}(x)\cdot \int_0^x
(\tan(|t|/2))^{1/\epsilon} dt\, .$$ Thus, $y^{\prime}(x)>0$ for
every $x\in (-\pi ,\pi )$. The latter is impossible for absolutely
continuous $2\pi$-periodic function. It is a contradiction!
\par
Now let us consider the second option. If $y(x)$ satisfies
(\ref{14}) and $(Ly)(x)\in \mathfrak{R}(MS)$, then there is $z(x)\in
\mathfrak{D}(MS)$ such that $(Lz)(x)=(Ly)(x)$. The latter yields
$(L(z-y))(x)\equiv 0$. Thus, by virtue of (\ref{13}),
$y(x)=z(x)+const$ that is impossible thanks to
(\ref{14}).\end{proof}
\begin{corollary}\label{cor1} $L$ is a closed operator with the
non-empty resolvent set and its resolvent has Sturm-Liouville
property.\end{corollary}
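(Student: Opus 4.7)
The plan is to derive all three assertions directly from the factorization $L=MS$ established in Theorem~\ref{main}.

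First I would verify that $L$ is closed. The proof of Theorem~\ref{main} showed that $M^{-1}$ is bounded on $\mathcal{L}^2$, hence $M$ itself is closed; the periodic differentiation operator $S$ is closed by the classical argument. If $y_n\in\mathfrak{D}(L)$ with $y_n\to y$ and $Ly_n=MSy_n\to f$, then $Sy_n=M^{-1}(Ly_n)\to M^{-1}f$, and closedness of $S$ gives $y\in\mathfrak{D}(S)$ with $Sy=M^{-1}f\in\mathfrak{R}(M^{-1})=\mathfrak{D}(M)$; hence $y\in\mathfrak{D}(L)$ and $Ly=f$.

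Next I would exhibit a compact inverse on the complement of the constants. Let $\mathcal{H}_0=\{const\}^\perp\cap\mathcal{L}^2$. By the proof of Theorem~\ref{main}, both $M$ and $M^{-1}$ leave $\mathcal{H}_0$ invariant, while $\mathfrak{R}(S)=\mathcal{H}_0$ because the derivative of any $2\pi$-periodic function has zero mean. Thus $L$ respects the orthogonal decomposition $\mathcal{L}^2=\mathbb{C}\cdot\mathbf{1}\oplus\mathcal{H}_0$, acts as $0$ on the constants, and restricts to a bijection of $\mathfrak{D}(L)\cap\mathcal{H}_0$ onto $\mathcal{H}_0$ with inverse $S^{-1}M^{-1}$. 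Since the mean-zero antiderivative $S^{-1}$ is a Hilbert--Schmidt integral operator and $M^{-1}$ is bounded, this inverse is compact. Consequently $\sigma(L|_{\mathcal{H}_0})$ is discrete with the sole accumulation point at infinity, so every $\lambda$ outside $\{0\}\cup\sigma(L|_{\mathcal{H}_0})$ belongs to the resolvent set of $L$.

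Finally, to obtain the Sturm--Liouville character of the resolvent, I would compute $(L-\lambda I)^{-1}$ as an integral operator. Using the factorization, the equation $(L-\lambda I)y=g$ on $\mathcal{H}_0$ becomes $(M-\lambda\,S^{-1})Sy=g$, which is inverted by the integrating factor $\sin(x)(\tan(|x|/2))^{1/\epsilon}$ that already appears in formula~(\ref{12}); this produces a Green's function with the standard jump on the diagonal, i.e.\ precisely the Sturm--Liouville form. The main technical obstacle is controlling the integrable singularities of the integrating factor at $x=0,\pm\pi$ and enforcing $2\pi$-periodicity of the resulting kernel; both points are handled by the sharp bounds~(\ref{4}), (\ref{7}) and~(\ref{10}) already established in the proof of the theorem.
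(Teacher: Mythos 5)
The paper states this corollary without any proof, and your derivation of all three claims from the factorization $L=MS$ of Theorem~\ref{main} --- boundedness of $M^{-1}$ together with closedness of $S$ giving closedness of $L$, and compactness of $S^{-1}M^{-1}$ on $\{const\}^{\perp}\cap\mathcal{L}^2$ giving discrete spectrum and hence a non-empty resolvent set --- is exactly the intended route and is correct. The only soft spot is the Sturm--Liouville assertion, which (reasonably, since the paper never defines the term) you leave as a sketch of the Green's-function construction rather than a full argument, but that matches the level of detail the paper itself supplies.
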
 Let us define by  $L_0$ the operator that
represents the restriction of $L$ on the set of all $2\pi$-periodic
smooth functions and by $\bar{L}_0$ its closure.
\begin{theorem} \label{Lbar} $L=\bar{L}_0$. The operator $L$ is $J$-self-adjoint
in the Krein space with indefinite metric $J$ defined as $J(f(x)) =
f(\pi - x)$.
\end{theorem}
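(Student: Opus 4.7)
The plan is to establish the formal-adjoint identity $L^{*}=JLJ$ first and then deduce both assertions from it.

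To compute $L^{*}$, I would exploit the factorization $L=MS$ of Theorem~\ref{main}. Since $S$ acts on $2\pi$-periodic functions, $S^{*}=-S$, and from the identity $A^{*}=-B$ established inside the proof of Theorem~\ref{main}, $M^{*}=I-\epsilon B$ with $(Bf)(x)=\sin(x)f'(x)$. A direct calculation using $\cos(\pi-x)=-\cos x$ and $\sin(\pi-x)=\sin x$ gives $JSJ=-S$ and $JMJ=I-\epsilon A$; combined with the product-rule identity $AS=SB$ (both equal the expression $(\sin x\,f'(x))'$), this yields $JLJ=(JMJ)(JSJ)=-(I-\epsilon A)S$ and $L^{*}=S^{*}M^{*}=-S(I-\epsilon B)$, which agree as the differential expression $g\mapsto\epsilon(\sin x\,g'(x))'-g'(x)$. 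To promote this formal identity to an equality of unbounded operators, I would show $L_{0}^{*}=L^{*}$. Integration by parts, with all boundary terms vanishing thanks to $\sin(\pm\pi)=0$ and periodicity, places every $g\in J\mathfrak{D}(L)$ in $\mathfrak{D}(L_{0}^{*})$, giving one inclusion. For the reverse, if $g\in\mathfrak{D}(L_{0}^{*})$ then distributional testing against arbitrary smooth periodic test functions forces $\epsilon(\sin x\,g')'-g'\in\mathcal{L}^{2}$ in the distributional sense; rewriting this as a first-order linear ODE in $g'$ and rerunning the integrating-factor / near-zero integrability analysis used in the proof of Theorem~\ref{main} (now with integrating factor $|\sin x|\,|\tan(x/2)|^{-1/\epsilon}$) shows that $g$ is absolutely continuous and $2\pi$-periodic with $Jg\in\mathfrak{D}(L)$. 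Hence $L_{0}^{*}=L^{*}=JLJ$.

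Both assertions now follow almost immediately. The equation $JL^{*}J=L$ is by definition $L^{+}=L$, i.e.\ $J$-self-adjointness; the Krein structure is provided by the fact that the substitution $x\mapsto\pi-x$ preserves Lebesgue measure and periodicity, so $J$ is a unitary involution on $\mathcal{L}^{2}$ with infinite-dimensional $\pm 1$-eigenspaces (functions symmetric and antisymmetric about $\pi/2$), delivering the canonical decomposition. Taking adjoints once more in $L_{0}^{*}=L^{*}$ gives $\bar{L}_{0}=L_{0}^{**}=L^{**}=L$, the last equality using the closedness of $L$ from Corollary~\ref{cor1}.

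The main obstacle is the inclusion $\mathfrak{D}(L_{0}^{*})\subseteq J\mathfrak{D}(L)$: one must extract precise absolute-continuity and boundary behavior of an a priori merely $\mathcal{L}^{2}$ function purely from distributional information, in particular handling the singularity at $x=0$ where the integrating factor blows up. This is exactly the same phenomenon that forced the constant $c$ in (\ref{12}) to vanish in the proof of Theorem~\ref{main}, and the restriction $\epsilon\in(0,2)$ enters once more at this step through the integrability of $x^{-1/\epsilon}$ near zero.
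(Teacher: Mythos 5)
Your route is genuinely different from the paper's. The paper proves $L=\bar L_0$ by a resolvent argument: $\bar L_0\subseteq L$ is immediate, Davies' result gives $\rho(\bar L_0)\neq\emptyset$, and Corollary~\ref{cor1} (a consequence of the factorization $L=MS$) gives $\rho(L)\neq\emptyset$; if the inclusion were proper, every $\lambda\in\rho(\bar L_0)$ would be an eigenvalue of $L$, a contradiction. The $J$-self-adjointness is then read off from the asserted form of $L^*$ and the identity $L=JL^*J$. You instead compute $L^*=JLJ$ from the factorization, prove $L_0^*=L^*$ by a direct domain analysis, and recover $\bar L_0=L_0^{**}=L^{**}=L$. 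What your approach buys is self-containedness (no appeal to Davies' computation of $\rho(\bar L_0)$) and an honest identification of $\mathfrak{D}(L^*)$, which the paper glosses over when it says $L^*$ acts ``on the same domain as $L$.'' What it costs is that the entire weight falls on the hard inclusion $\mathfrak{D}(L_0^*)\subseteq J\mathfrak{D}(L)$, which the paper's argument avoids entirely.

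Two places in your sketch need repair. First, $(MS)^*=S^*M^*$ is not automatic for unbounded factors; it holds here because $M$ is boundedly invertible with $M^{-1}$ everywhere defined (or you can bypass it, since your distributional argument, if completed, closes the chain $\mathfrak{D}(L_0^*)\subseteq J\mathfrak{D}(L)\subseteq\mathfrak{D}(L^*)\subseteq\mathfrak{D}(L_0^*)$ without it), but as written it is an unjustified step. Second, the claim that the adjoint analysis near $x=0$ is ``exactly the same phenomenon'' as the vanishing of $c$ in (\ref{12}) is not quite right: your integrating factor $|\sin x|\,|\tan(x/2)|^{-1/\epsilon}$ has the exponent inverted relative to (\ref{12}), so near $0$ the integral $\int_0^x|\tan(t/2)|^{-1/\epsilon}h(t)\,dt$ need not converge for $h\in\mathcal{L}^2$ when $\epsilon<2$, and the base point of integration must be moved. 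Under $J$ the singular points $0$ and $\pm\pi$ exchange roles, so the delicate analysis for $\mathfrak{D}(L_0^*)$ sits at $x=\pm\pi$, mirroring the behavior of $\mathfrak{D}(L)$ near $0$ rather than reproducing it verbatim. With these two points fixed, your argument goes through and in fact yields more than the paper states.
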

\begin{proof} It is evident that $\bar{L}_0\subseteq L$. In the paper
of Davies \cite{Davies} was shown that $\bar{L}_0$ has the non-empty
resolvent set. If $\bar{L}_0\not= L$ then every $\lambda\in
\rho(\bar{L}_0)$ belongs to $\sigma_p(L)$ but it is impossible due
to Corollary \ref{cor1}.

The adjoint operator is defined by the operation
$$(L^*y)(x)=\epsilon\cdot(\sin(x)y^{\prime}(x))^{\prime}- y^{\prime}(x)$$
on the same domain as the operator $L$. The last statement of the
theorem follows immediately from the equality  $L = JL^{*}J$.
\end{proof}

\begin{remark} The J-self-adjoint operator $L$ being restricted to the
subspace orthogonal to a constant has a compact inverse that implies
that there exist two, non-positive and non-negative with respect to
the metric $J$, maximal invariant under the operator $L$
subspaces.\end{remark}

\section{Domain of the operator $L$}

As it was proved in the previous section, the operator $M:$
$$\mathcal{L}^2\mapsto \mathcal{L}^2, \quad (My)(x)=\epsilon\cdot
(\sin(x)y(x))^{\prime}+ y(x)$$ with the domain of all absolutely
continuous  functions $y(x)$ such that $(My)(x)\in \mathcal{L}^2$
has bounded inverse. Hence, there is a constant $p_1$ such that
\begin{equation}\label{add1}
\|\epsilon\cdot (\sin(x)y(x))^{\prime}+ y(x)\|_{L^2} \geq p_1\cdot
\|y(x)\|_{L^2}\end{equation} for every $y(x)\in \mathfrak{D}(M)$.
Next, the operator $L$ is closed, so its domain is closed with
respect to the norm of the graphic of $L$, i.e. for the norm
\begin{equation}\label{add2}\|y(x)\|_{g}:=\{\|y(x)\|^2_{L^2}+\|\epsilon\cdot
(\sin(x)y^{\prime}(x))^{\prime}+
y^{\prime}(x)\|^2_{L^2}\}^{1/2}.\end{equation} Our aim is to show
that for $y(x)\in\mathfrak{D}(L)\cap \{const\}^{\perp}$ the norm
(\ref{add2}) is equivalent to the following norm
\begin{equation}\label{add3}\|y(x)\|_{m}:=\{\|y^{\prime}(x)\|^2_{L^2}+\|\sin(x)\cdot
y^{\prime}(x)\|^2_{L^2} +\|(\sin(x)\cdot
y^{\prime}(x))^{\prime}\|^2_{L^2}\}^{1/2}.\end{equation} Indeed, for
every $y(x)\in\mathfrak{D}(L)\cap \{const\}^{\perp}$ we have
$$\|y(x)\|_{g}\leq \{\|y^{\prime}(x)\|^2_{L^2}+2\|\epsilon\cdot
(\sin(x)y^{\prime}(x))^{\prime}\|_{L^2}+2\cdot \|
y^{\prime}(x)\|^2_{L^2}\}^{1/2}\leq p_2\cdot\|y(x)\|_{m},$$ where
$p_2=\max \{ 3,\, 2\epsilon \}$. From the other hand, taking into
account (\ref{add1}), we have
$$\|y(x)\|_{g}\geq \|\epsilon\cdot
(\sin(x)y^{\prime}(x))^{\prime}+ y^{\prime}(x)\|_{L^2}\geq $$
$$\frac{p_1}{2}\cdot\|y^{\prime}(x)\|_{L^2}+ \frac{1}{2}\cdot
\|\epsilon\cdot (\sin(x)y^{\prime}(x))^{\prime}+
y^{\prime}(x)\|_{L^2}\geq $$
$${3p_3}\cdot\|y^{\prime}(x)\|_{L^2}+ {p_3}\cdot
\|\epsilon\cdot (\sin(x)y^{\prime}(x))^{\prime}+
y^{\prime}(x)\|_{L^2} \geq$$
$${p_3}\cdot\big( \|y^{\prime}(x)\|_{L^2}+ \|\sin(x)y^{\prime}(x)\|_{L^2}+
\|\epsilon\cdot (\sin(x)\cdot y^{\prime}(x))^{\prime}\|_{L^2}\big)
,$$ where $p_3=\min \{\frac{p_1}{6},\frac{1}{2}\}$.

It follows from above that the domain of $L$ is the linear
sub-manifold  $H$ of the Sobolev space $H^1 (-\pi, \pi)$:
$$ \mathfrak{D}(L) = H : { f \in H^1 (-\pi, \pi),\quad f(\pi) = f(-\pi,) \quad
\sin(x)f' \in H^1 (-\pi, \pi) }$$ and is a Hilbert space with the
norm defined as:
$$||f||^2 = || f'||^2_{L^2} + || \sin(x)f'(x)||^2_{H^1}.$$
\begin{remark}
As a consequence of Theorem \ref{main}, the tridiagonal matrix
operator $\mathcal{A}$ in Fourier space \cite{ChugPel}.
\begin{equation}
\label{matrix-A}
\mathcal{A} = \left[ \begin{array}{ccccc} 1 & \epsilon & 0 & 0 & \cdots \\
-\epsilon & 2 & 3 \epsilon & 0 & \cdots \\ 0 & -3 \epsilon & 3 &
6 \epsilon & \cdots \\ 0 & 0 & -6 \epsilon & 4 & \cdots \\
\vdots & \vdots & \vdots & \vdots & \ddots \end{array} \right]
\end{equation}
admits the same type of the factorization as in Theorem \ref{main}
\begin{equation}
\label{matrix-A1}
\mathcal{A} = \mathcal{B}\mathcal{C} = \left[ \begin{array}{ccccc} 1 & \epsilon/2 & 0 & 0 & \cdots \\
-\epsilon & 1 &  \epsilon & 0 & \cdots \\ 0 & -3/2 \epsilon & 1 &
3/2 \epsilon & \cdots \\ 0 & 0 & -2 \epsilon & 1 & \cdots \\
\vdots & \vdots & \vdots & \vdots & \ddots \end{array} \right]
\left[ \begin{array}{ccccc} 1 & 0 & 0 & 0 & \cdots \\
0 & 2 & 0 & 0 & \cdots \\ 0 & 0 & 3 & 0& \cdots \\ 0 & 0 & 0 & 4 & \cdots \\
\vdots & \vdots & \vdots & \vdots & \ddots \end{array} \right]
\end{equation}
there $\mathcal{B}$ has bounded inverse and $\mathcal{C}$ has
inverse of the Hilbert-Schmidt type. The operator $\mathcal{A}$ is
$\mathcal{J}$-self-adjoint with $\mathcal{J} = diag(-1,1,-1,1, ...
)$. Thus, the natural domain $\mathfrak{D}(\mathcal{A})$ is the
Hilbert space of all number sequences $\{ f_n \}_1^{\infty}$ with
the norm
$$\|\{ f_n \}_1^{\infty}\|:=
\big\{
\sum_{n=1}^{\infty}n^2\big(|f_n|^2+|(n+1)f_{n+1}-(n-1)f_{n-1}|^2\big)\big\}^{1/2}
.$$ In the latter formula we put for definiteness $f_0=0$. Note also
that there exist two, non-positive and non-negative with respect to
the indefinite metric defined by $\mathcal{J}$, maximal invariant
under the operator $\mathcal{A}$, subspaces.
\end{remark}

{\bf Acknowledgement.}  M.C. is supported by the NSERC Postdoctoral
Fellowship.

\end{document}